\DeclareMathOperator{\GL}{\mathrm{GL}}
\DeclareMathOperator{\Irr}{\mathrm{Irr}}
\DeclareMathOperator{\End}{\mathrm{End}}
\DeclareMathOperator{\gr}{\mathrm{gr}}
\DeclareMathOperator{\Ad}{\mathrm{Ad}}
\def\ii{\mathrm{i}}
\def\C{\mathbbm{C}}
\def\Q{\mathbbm{Q}}
\def\Z{\mathbbm{Z}}
\def\la{\lambda}
\newtheorem{prop}{Proposition}[section]
\newtheorem{lemma}[prop]{Lemma}
\newtheorem{cor}[prop]{Corollary}
\newtheorem{theo}[prop]{Theorem}
\newtheorem{conj}[prop]{Conjecture}
\renewcommand{\k}{\mathbbm{k}}
\def\un{\mathbbm{1}}
\newcommand{\into}{\hookrightarrow}
\newcommand{\onto}{\twoheadrightarrow}
\def\cnode#1{{\kern -0.4pt\bigcirc\kern-7pt{\scriptstyle#1}\kern 2.6pt}}
\def\ncnode#1#2{{\kern -0.4pt\mathop\bigcirc\limits_{#2}\kern-8.6pt{\scriptstyle#1}\kern 2.3pt}}
\def\node{{\kern -0.4pt\bigcirc\kern -1pt}}
\def\snode{{\kern -0.4pt{\scriptstyle\bigcirc}\kern -1pt}}
\def\nnode#1{{\kern -0.6pt\mathop\bigcirc\limits_{#1}\kern -1pt}}
\def\bar#1pt{{\vrule width#1pt height3pt depth-2pt}}
\def\dbar#1pt{{\rlap{\vrule width#1pt height2pt depth-1pt} 
                 \vrule width#1pt height4pt depth-3pt}}
\def\tbar#1pt{{\rlap{\vrule width#1pt height1pt depth0pt}
           \rlap{\vrule width#1pt height3pt depth-2pt}
\vrule width#1pt height5pt depth-4pt}}
\def\overmark#1#2{\kern -1.5pt\mathop{#2}\limits^{#1}\kern -2pt}
\def\trianglerel#1#2#3{
 \nnode#1\bar14pt\kern-13pt\raise7.5pt\hbox{$\displaystyle \underleftarrow 6$}
  \kern 2pt\nnode#2
 \kern-29pt\raise9.5pt\hbox{$\diagup$}
 \kern -2pt
 \raise17.5pt\hbox{$\node$\rlap{\raise 2pt\hbox{$\kern 1pt\scriptstyle #3$}}}
 \kern -1pt\raise9.5pt\hbox{$\diagdown$}
 \kern 3pt
}
\def\vertbar#1#2{\rlap{$\nnode{#1}$}
                 \rlap{\kern4pt\vrule width1pt height17.3pt depth-7.3pt}
\raise19.4pt\hbox{$\node$\rlap{$\kern 1pt\scriptstyle#2$}}}
\def\SN{\mathfrak{S}_n}
\DeclareMathOperator{\Ind}{\mathrm{Ind}}
\DeclareMathOperator{\Res}{\mathrm{Res}}
\title{Hooks generate the representation ring of the symmetric group}
\author{Ivan Marin}
\date{October 7, 2010}
\begin{document}

\maketitle

\bigskip
\begin{center}
Institut de Math\'ematiques de Jussieu \\
Universit\'e Paris 7 \\
175 rue du Chevaleret \\
F-75013 Paris
\end{center}
\bigskip

\noindent {\bf Abstract.} We prove that the representation ring of the
symmetric group on $n$ letters is generated by the exterior powers of its natural
$(n-1)$-dimensional representation. The proof we give illustrates a strikingly
simple formula due to Y. Dvir. We provide an application and investigate a
possible generalization of this result to some other reflection groups.

\bigskip

\noindent {\bf Keywords.}  Symmetric group, representation rings.

\bigskip

\noindent {\bf MSC 2000.} 20C30.

\section{Introduction}

We let $\mathfrak{S}_n$ denote the symmetric group on $n$ letters,
and $R(\mathfrak{S}_n)$ its ordinary representation ring,
or equivalently the ring of its complex characters. It is a free $\Z$-module
with basis $(V_{\la})_{\la \vdash n}$ of irreductible characters classically
indexed by the set of partitions $\la = [\la_1 ,\la_2, \dots]$ of $n = \la_1 + \la_2 + \dots$
with $\la_1 \geq \la_2 \geq \dots \geq 0$. As usual, we identify such a $\la$
with a Young (or Ferrers) diagram, and we use the
row-aligned, left-justified, top-to-bottom convention (e.g. the
left-hand sides of figure \ref{figurerestretinduit} represent the
partition $[3,2,1,1]$). The \emph{size} $n$ of the partition $\la$ is
denoted $|\la|$.

We refer to \cite{FH} for classical facts about the correspondence between
representations and partitions.
The notation we use here is such that the partition $[n]$ is attached to
the trivial representation $V_{[n]} = \un$, and the natural permutation representation
$\SN < \GL_n(\C)$ decomposes itself as $\C^n = \un + V$ with $V = V_{[n-1,1]}$.
Among the classical results that can be found in \cite{FH} we recall that
the exterior powers $\Lambda^k V$ for $0 \leq k \leq n$
provide irreducible representations attached to the partitions $[n-k,1^k]$.
Such representations or the corresponding partitions are classically called \emph{hooks}.

The purpose of this note is to prove the following.

\begin{theo} \label{maintheo} For every $n \geq 1$, the representation ring $R(\SN)$
is generated by the hooks $\Lambda^k V, 0 \leq k \leq n-1$.
\end{theo}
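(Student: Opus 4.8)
The plan is to exploit the fact that the Kronecker (tensor) product of characters is simply the \emph{pointwise} product of class functions, $(\chi\otimes\psi)(\sigma)=\chi(\sigma)\psi(\sigma)$. Thus $R(\SN)$ sits inside the ring $\Z^{\mathrm{Cl}}$ of integer-valued class functions with pointwise operations, the conjugacy classes being indexed by cycle types $\rho\vdash n$. The characters of the hooks are completely explicit: from $\sum_k t^k\operatorname{tr}(\sigma\mid\Lambda^k\C^n)=\prod_i\bigl(1-(-t)^{\rho_i}\bigr)$ together with $\C^n=\un\oplus V$ one gets $\sum_k t^k\,\chi_{\Lambda^k V}(\rho)=\tfrac{1}{1+t}\prod_i(1-(-t)^{\rho_i})$. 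I would first dispose of the statement over $\Q$: the hooks generate $R(\SN)\otimes\Q=\Q^{\mathrm{Cl}}$ as a $\Q$-algebra precisely when their characters separate cycle types, and a short cyclotomic-factorisation argument shows that the multiset $\{\rho_i\}$ can be read off from $\prod_i(1-(-t)^{\rho_i})$, so separation, hence rational generation, holds. This isolates the real content of the theorem, which is the \emph{integrality} of the generation.

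For the integral statement I would argue by a triangularity induction with respect to the dominance order on partitions, the engine being Dvir's formula. In the decomposition $V_\lambda\otimes V_\mu=\sum_\nu g(\lambda,\mu,\nu)V_\nu$ the largest possible first part of a constituent is given by the strikingly simple expression $\max\{\nu_1:g(\lambda,\mu,\nu)\neq0\}=\sum_i\min(\lambda_i,\mu_i)$, and Dvir's refinement describes the constituents attaining this maximum together with their multiplicities. The aim is to produce, for each $\lambda\vdash n$, a product of hooks whose decomposition has $V_\lambda$ as a dominance-leading constituent occurring with multiplicity a unit $\pm1$, every other constituent being either itself a hook or strictly higher in dominance, hence already available. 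For instance $V\otimes V=\un+V+V_{[n-2,2]}+\Lambda^2V$, whence $V_{[n-2,2]}\in H$ since the three remaining terms are hooks; the same mechanism, applied along the dominance order, should reach every $V_\lambda$.

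Two further devices streamline the induction. First, since the sign character is the top hook $\Lambda^{n-1}V$, the subring $H$ generated by the hooks is stable under $\lambda\mapsto\lambda'$ (conjugation of diagrams), which halves the partitions to be treated. Second, once each $V_\lambda$ has been exhibited as a $\Z$-combination of products of hooks and of dominance-higher (or hook) characters, the chosen products form a matrix that is unitriangular, with diagonal entries $\pm1$, with respect to $\{V_\lambda\}$ in a suitable linear refinement of the dominance order; its determinant is $\pm1$, so these products are a $\Z$-basis of $R(\SN)$. As they lie in $H\subseteq R(\SN)$, this forces $H=R(\SN)$.

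The hard part is the explicit construction just described, and the verification via Dvir's refinement that the dominance-leading multiplicity is exactly a unit with no spurious higher constituents. Dvir's formula controls only the first part $\nu_1$; to pin down the full dominance position of the leading term one must apply it recursively to the successive rows, equivalently use the precise description of the top layer $\{\nu:\nu_1=\sum_i\min(\lambda_i,\mu_i)\}$, and ensuring that the coefficient there equals $\pm1$ rather than a larger integer is the delicate point on which the integral generation hinges.
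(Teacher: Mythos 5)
Your strategy --- isolate the integral statement and prove it by a triangularity induction driven by Dvir's formula --- is the same one the paper uses, but your write-up stops exactly where the proof has to do its work. You never specify \emph{which} product of hooks is supposed to realize a given $V_{\la}$ as leading constituent, and you yourself flag the verification that the leading multiplicity is a unit ``with no spurious higher constituents'' as the delicate open point. That verification \emph{is} the theorem; as written this is a plan rather than a proof. (The rational reduction in your first paragraph is correct but does not advance the integral statement, as you acknowledge.) For comparison, the paper's construction is: write $\la=[\la_1,\alpha]$ with $\alpha=\theta(\la)$ the diagram obtained by deleting the first row, let $\alpha^{\circ}$ be $\alpha$ with one box removed from each nonzero row and $r$ the number of rows of $\alpha$; then in the associated graded ring of the filtration by $N(\nu)=n-\nu_1$ one computes $V_{\mu}\odot\Lambda^{r}V$ with $\mu=[\,n-|\alpha^{\circ}|,\alpha^{\circ}_1,\alpha^{\circ}_2,\dots]$. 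Dvir's theorem in its full form ($C_{\la\mu\nu}=L_{\theta(\la),\theta(\mu),\theta(\nu)}$ at the top level, not merely the formula for the extremal first part) reduces the coefficients to a Pieri computation for $[1^{r}]$, which gives coefficient exactly $1$ on $V_{\la}$ and forces every other top-level constituent $\nu$ to have $\theta(\nu)$ with strictly more rows than $\alpha$.

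There is also a concrete obstacle to the induction in the order you propose. The unwanted constituents produced by such products are generally \emph{lower} in dominance than $\la$, not higher: already in your own example $V\otimes V$ the extra term $V_{[n-2,1,1]}$ satisfies $[n-2,1,1]$ strictly below $[n-2,2]$ in dominance (it happens to be a hook, which saves that case), and for $\alpha=[2,1]$ the product above produces an extra constituent with $\theta(\nu)=[1,1,1]$, again strictly below. So a descending induction on dominance, in which everything other than the target is ``already available'' because it is higher, does not close. The paper instead inducts on the pair $\bigl(n-\la_1,\ |\theta(\la)^{\circ}|\bigr)$: the extra constituents have the same first part but strictly smaller $|\theta(\nu)^{\circ}|$ (equivalently, $\theta(\nu)$ has more rows), and those at lower filtration level are handled by the primary induction. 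Some such reordering --- primary on $n-\la_1$, secondary on a statistic of $\theta(\la)$ --- is needed before your ``unitriangular matrix with determinant $\pm1$'' conclusion can be drawn; with the dominance order alone the matrix you describe is not triangular.
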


Note that $\Lambda^{k+1} \C^n = \Lambda^{k+1} V \oplus \Lambda^k V$, hence
the collection of the $\Lambda^k V$ and the collection of the $\Lambda^k \C^n$ span the
same additive subgroup of $R(\SN)$.
Another version of the same result is thus the following.
 
\begin{theo} For every $n \geq 1$, the representation ring $R(\SN)$
is generated by the representations $\Lambda^k \C^n, 0 \leq k \leq n-1$.
\end{theo}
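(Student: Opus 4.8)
By the observation recorded between the two statements, the hooks $\Lambda^k V$ and the representations $\Lambda^k\C^n$ span the same additive subgroup of $R(\SN)$ and hence generate the same subring; it therefore suffices to prove Theorem~\ref{maintheo}. Write $H\subseteq R(\SN)$ for the subring generated by the hooks $h_k:=\Lambda^k V=V_{[n-k,1^k]}$, $0\le k\le n-1$, and for a partition $\la\vdash n$ set $d(\la)=|\la|-\la_1$, the number of boxes lying below the first row. The plan is to show $V_\la\in H$ for every $\la$ by induction on $d(\la)$. The base case $d(\la)=0$ forces $\la=[n]$, so $V_{[n]}=\un=h_0\in H$.

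For the inductive step the engine is Dvir's formula, governing the ``top'' of a Kronecker product: for $\la,\mu\vdash n$ every constituent $V_\nu$ of $V_\la\otimes V_\mu$ satisfies $d(\nu)\le d(\la)+d(\mu)$, and (when $d(\la)+d(\mu)\le n$) the constituents attaining equality are exactly the $V_{[\,n-d(\la)-d(\mu),\,\hat\nu\,]}$, where $\hat\nu$ runs through the constituents of the outer (induction) product $V_{\hat\la}\cdot V_{\hat\mu}$ with $\hat\la=[\la_2,\la_3,\dots]$, counted with the same multiplicities. Since $\widehat{h_k}=[1^k]$, and the outer product $V_{[1^{k_1}]}\cdot V_{[1^{k_2}]}\cdots$ corresponds under the characteristic map to the product of elementary symmetric functions $e_{k_1}e_{k_2}\cdots$, iterating the formula shows that for $k_1+\dots+k_r=m\le n$ one has
\begin{equation*}
h_{k_1}\otimes\cdots\otimes h_{k_r}=\sum_{\hat\nu\vdash m}\langle s_{\hat\nu},\,e_{k_1}\cdots e_{k_r}\rangle\,V_{[\,n-m,\,\hat\nu\,]}\;+\;\bigl(\text{terms }V_\rho\text{ with }d(\rho)<m\bigr),
\end{equation*}
where $\langle\,\cdot\,,\cdot\,\rangle$ is the Hall pairing. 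The lower-$d$ terms from one stage, once tensored with a further hook, can only produce terms of $d<m$ by the subadditivity $d(\nu)\le d(\la)+d(\mu)$, so they never contaminate the top layer.

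Now fix $\la$ with $d(\la)=m\ge 1$ and put $\sigma=\hat\la\vdash m$, so $\la=[\,n-m,\,\sigma\,]$ with $\sigma_1=\la_2\le\la_1=n-m$. By the dual Jacobi--Trudi identity, $s_\sigma=\det\bigl(e_{\sigma'_i-i+j}\bigr)$ expands as an explicit $\Z$-linear combination $\sum_\mu M_{\sigma\mu}\,e_\mu$ of products $e_\mu=e_{\mu_1}e_{\mu_2}\cdots$ with $\mu\vdash m$ and at most $\sigma'_1=\sigma_1$ factors. Because each factor $e_k$ adds a vertical strip, every $\hat\nu$ occurring in the displayed leading term of such a product has at most $\sigma_1\le n-m$ columns, so $[\,n-m,\,\hat\nu\,]$ is a genuine partition and Dvir's formula applies at every intermediate stage. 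Forming $X:=\sum_\mu M_{\sigma\mu}\,(h_{\mu_1}\otimes h_{\mu_2}\otimes\cdots)\in H$ and using $\langle s_{\hat\nu},\sum_\mu M_{\sigma\mu}e_\mu\rangle=\langle s_{\hat\nu},s_\sigma\rangle=\delta_{\hat\nu,\sigma}$ to match leading terms, we obtain
\begin{equation*}
X=V_{[\,n-m,\,\sigma\,]}+\sum_{d(\rho)<m}a_\rho\,V_\rho=V_\la+\sum_{d(\rho)<m}a_\rho\,V_\rho,\qquad a_\rho\in\Z.
\end{equation*}
By the induction hypothesis each $V_\rho$ with $d(\rho)<m$ lies in $H$, hence so does $V_\la=X-\sum_\rho a_\rho V_\rho$, completing the induction.

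The main obstacle I anticipate is twofold. First, the precise statement and the clean iteration of Dvir's formula: one must verify that the top-$d$ layer of a product of hooks is computed \emph{exactly} by the outer product of the $e_{k_i}$, with no interference from lower layers. Second, the shape-and-integrality bookkeeping: the argument only works because the Jacobi--Trudi expansion of $s_\sigma$ uses at most $\sigma_1$ factors, which via $\sigma_1\le n-m$ guarantees that every intermediate leading term $[\,n-m,\,\hat\nu\,]$ (and each partial product along the way) is an honest partition, so that Dvir's formula is legitimately applicable throughout.
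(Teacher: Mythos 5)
Your proposal is correct, and it takes a genuinely different route from the paper's. Both arguments reduce to the hook version via the additive remark between the two theorems, and both run on Dvir's formula; but where you import the dual Jacobi--Trudi identity $s_{\sigma}=\det(e_{\sigma'_i-i+j})$ to write $s_{\theta(\la)}$ as an explicit integral combination of products of elementary symmetric functions and then lift that identity through an \emph{iterated} application of Dvir (with the column-count bookkeeping needed to guarantee that every intermediate leading term $[\,n-m',\hat\nu\,]$ is a genuine partition), the paper instead passes to the graded ring $\gr R(\SN)$ for the level filtration and runs a \emph{double} induction --- on $d(\la)$ and then on $|\theta(\la)^{\circ}|$ --- in which each step analyzes a single product $V_{\mu}\odot\Lambda^{r}V$ with $\mu=[n-|\alpha^{\circ}|,\alpha^{\circ}]$ and $r=|\partial\alpha|$, using only the Pieri-type case $L_{\alpha^{\circ},[1^r],\,\cdot}$ of the Littlewood--Richardson rule. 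Your version is essentially the symmetric-function proof of Butler--Boorman that the introduction alludes to, transported by Dvir's formula; it buys an explicit expression for $V_\la$ as a polynomial in hooks modulo lower level. The paper's version buys the sharper Proposition 4.2/Corollary 4.3 --- a \emph{single} product $\hat\la\otimes\Lambda^k V$ containing $\la$ with multiplicity one modulo $R_\la$ --- which is exactly what the rationality application in Section 5 needs and which your global Jacobi--Trudi combination does not directly provide. One notational slip: the size of the dual Jacobi--Trudi determinant is $\ell(\sigma')=\sigma_1$, not $\sigma'_1$ (the latter is the number of rows of $\sigma$); the bound you actually use, namely at most $\sigma_1\le n-m$ factors, is the correct one.
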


This latter version can be compared with the similar classical result for
$\GL_n(\C)$, that its ring of rational representations is
generated by the $\Lambda^k \C^n$ (which, in terms of characters,
simply means that the symmetric polynomials are generated by the
elementary symmetric ones -- see e.g. \cite{FH}, (6.2) and appendix A).

It has been communicated to us by J.-Y. Thibon that, when translated
in the language of symmetric functions, the theorems above are equivalent
to the results of Butler and Boorman (see \cite{BUTLER,BOORMAN} and
also \cite{STW}). The main point of this note is thus to show how
to derive this result from the strikingly simple formula of Dvir
(see \S 3 below), and to explore natural generalizations.

It is indeed a remarkable fact that, while tensor product
decompositions are very well-understood for the representations
of reductive Lie groups, the ring structure of classical finite
groups is often difficult to understand in terms of the natural
indexing of their irreducible representations. Having a nice
generating family for its representationg ring is typically one of the nice features of the symmetric
group that one would like to generalize.

\section{A filtration on $R(\SN)$}

Let $G$ be a finite group, $V$ a faithful (finite-dimensional,
complex, linear) representation of $G$ and $\Irr(G)$ the set of all irreducible
representations. Then the representation ring $R(G)$ is a free $\Z$-module
with basis $\Irr(G)$, and each $\rho \in \Irr(G)$ embeds into some
$V^{\otimes r}$ for $r \in \Z_{\geq 0}$ (Burnside-Molien, see e.g. \cite{FH} problem 2.37).
The \emph{level} (or \emph{depth}) of $\rho \in \Irr(G)$ with respect to $V$ is defined to be
$$
N(\rho) = \min \{ r \in \Z_{\geq 0} \ | \ \rho \into V^{\otimes r} \}
$$
Obviously we have $N(\rho_1 \otimes \rho_2) \leq N(\rho_1) + N(\rho_2)$,
$N(\un) = 0$. It follows that the subgroup $\mathcal{F}_r$ of $R(G)$
generated by the $\rho \in \Irr(G)$ with $N(\rho) \leq r$
defines a ring filtration $\mathcal{F}_0 \subset \mathcal{F}_1 \subset
\dots \subset$ of $R(G)$, hence a ring structure $(\gr R(G), \odot)$
on the graded ring
$$
\gr R(G) = \bigoplus_{k=0}^{+\infty} (\mathcal{F}_{k} R(G))/(\mathcal{F}_{k-1} R(G))
$$
with the convention $\mathcal{F}_{-1} R(G) = \{ 0 \}$. Notice that
$\Irr(G)$ provides a basis of $R(G)$ as a $\Z$-module.

We now let $G = \SN$.
Considering $\mathfrak{S}_{n-1} < \SN$
through the natural embedding that leaves the $n$-th letter untouched,
we let $\Ind : R(\mathfrak{S}_{n-1}) \to R(\SN)$
and $\Res : R(\SN) \to R(\mathfrak{S}_{n-1})$ denote the usual
induction and restriction morphisms. 
\begin{figure}
\begin{center}
\resizebox{12cm}{!}{
\includegraphics{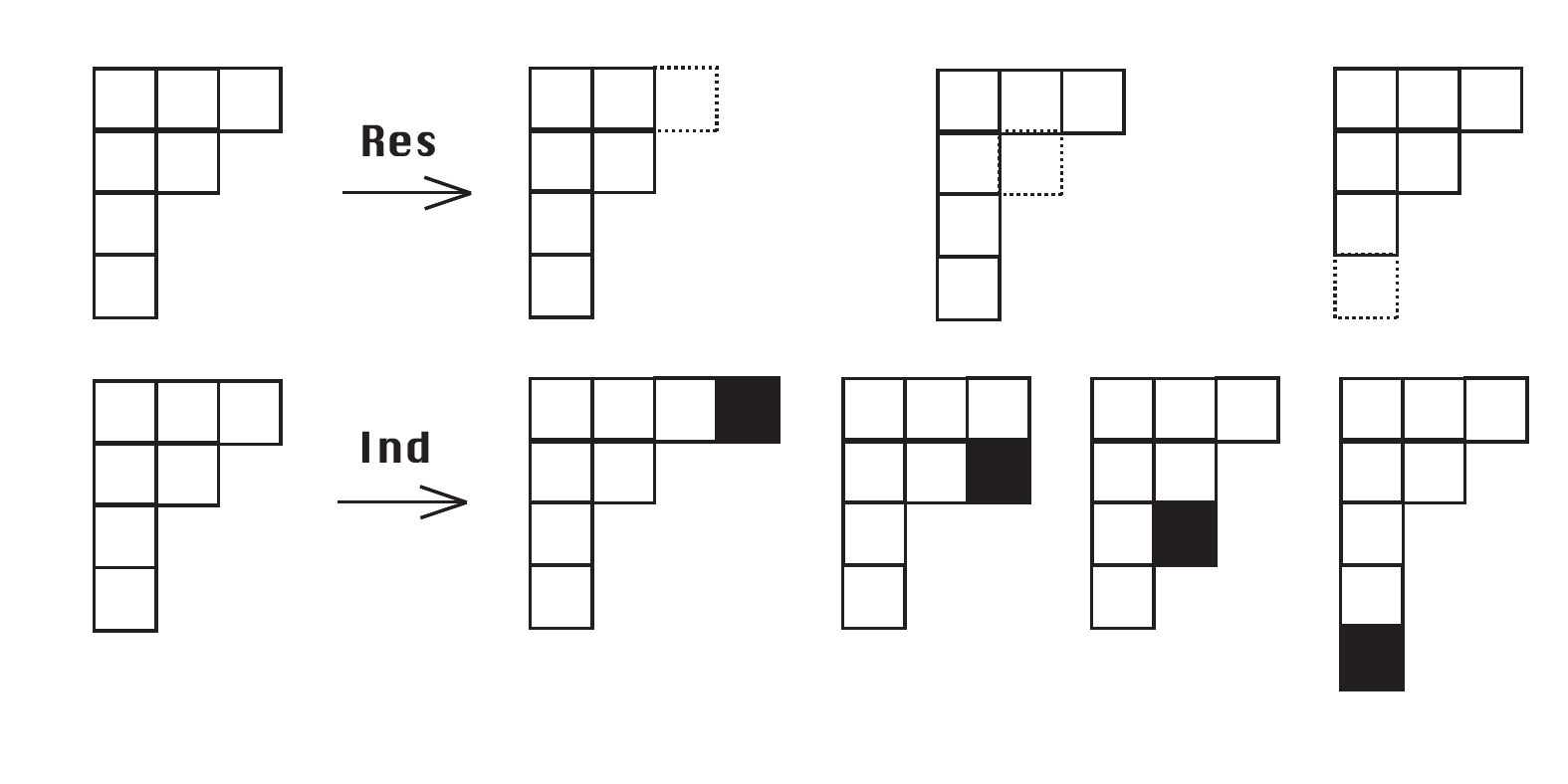}}
\end{center} 
\caption{Restriction and induction on Young diagrams}
\label{figurerestretinduit}
\end{figure}

Recall that $\Res$ and $\Ind$ are easily described on Young diagrams
by Young rule, as illustrated by figure \ref{figurerestretinduit}. If $\la$ is a Young diagram of size $n$, then $\Res V_{\la}$
is the sum (without multiplicities) in $R(\mathfrak{S}_{n-1})$
of the $V_{\mu}$, with 
$\mu$ being deduced from $\la$ by removing 
(respectively adding) 
one box.
Similarly, if $\la$ is a Young diagram of size $n-1$, then $\Ind V_{\la}$
is the sum (without multiplicities) in $R(\mathfrak{S}_{n})$
of the $V_{\mu}$, with 
$\mu$ being deduced from $\la$ by adding  one box.

The operator $\Ind \Res$ on Young diagrams then means summing all
$V_{\mu}$ for $\mu$ a diagram deduced from $\la$ by \emph{moving} one box, and
$\delta(\la)$ copies of $V_{\la}$ where $\delta(\la) = \# \{ i \ | \ \la_i
\neq \la_{i+1} \}$ (see figure \ref{figureindres}).

\begin{figure}
\begin{center}
\resizebox{8cm}{!}{\includegraphics{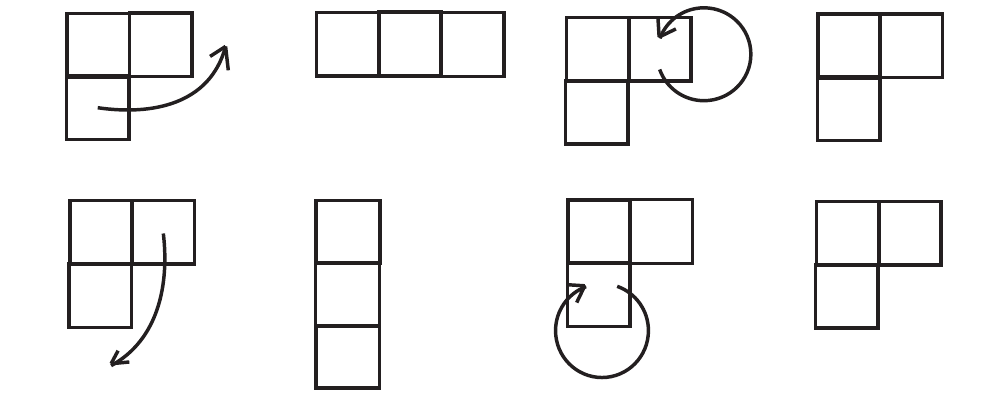}}
\end{center} 
\caption{$\Ind \circ \Res\  [2,1]$}
\label{figureindres}
\end{figure}

Let $V = V_{[n-1,1]}$. 
By the above Young rule, we have $\C^n = \Ind \un = \un + V$.
Using the classical formula $U \otimes \Ind W \simeq \Ind ((\Res U) \otimes W)$
we get, for all $U \in R(\SN)$,
$$
U + U \otimes V = U \otimes (\un + V) = \Ind \Res U
$$
i.e. $U \otimes V = (\Ind \Res U) - U$. Because of this, $N(\la)= N(V_{\la})$
can be determined combinatorially. First note that,
if $V_{\la} \into V^{\otimes (r-1)} \otimes V$,
then $V_{\la} \into V_{\mu} \otimes V$ for some irreducible
$V_{\mu} \into V^{\otimes (r-1)}$. An immediate consequence of the
above remarks is thus that the number $\la_1$ of boxes in the first row
for $\la$ 
satisfies
$\la_1 \geq \mu_1 - 1$. By induction on $r$ this yields $r \geq n- \la_1$,
hence $N(\la) \geq n - \la_1$. One then easily gets
the following classical fact, for which we could not find an easy
reference.


\begin{prop} \label{propniveau} For all $\la \vdash n$, we have $N(\la) = n- \la_1$.
\end{prop}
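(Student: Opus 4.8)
The plan is to establish the reverse inequality $N(\la) \le n - \la_1$, since the lower bound $N(\la) \ge n - \la_1$ is already in hand. I would argue by decreasing induction on $\la_1$ (equivalently, increasing induction on $n - \la_1$). The base case $\la_1 = n$ forces $\la = [n]$, the trivial representation, for which $\un \into V^{\otimes 0}$ gives $N([n]) = 0 = n - n$.

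For the inductive step I would assume $\la_1 < n$, so that $\la$ has at least one box below its first row, and produce a partition $\mu \vdash n$ with $\mu_1 = \la_1 + 1$ from which $\la$ is deduced by moving a single box. Concretely, let $k$ be the index of the last nonzero row of $\la$; since $\la_1 < n$ we have $k \ge 2$ and $\la_k > \la_{k+1} = 0$, so the rightmost box of row $k$ is a corner. Removing it and appending a box to the first row yields $\mu = [\la_1 + 1, \la_2, \dots, \la_{k-1}, \la_k - 1]$, which is a genuine Young diagram because $\la_1 + 1 > \la_2$ and $\la_{k-1} \ge \la_k - 1$ hold automatically. By construction $\la$ is obtained from $\mu$ by moving one box (from row $1$ back to row $k$), so by the Young rule for $\Ind \Res$ recalled above, $V_\la$ occurs in $\Ind \Res V_\mu$ with coefficient $1$.

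I would then invoke the identity $V_\mu \otimes V = (\Ind \Res V_\mu) - V_\mu$ established earlier. Since $\mu_1 = \la_1 + 1 \neq \la_1$, we have $\mu \neq \la$, so the subtracted term $-V_\mu$ does not touch the multiplicity of $V_\la$; hence $V_\la$ occurs in $V_\mu \otimes V$ with coefficient $1$ and therefore embeds into it. Using the inductive hypothesis $N(\mu) = n - \mu_1 = n - \la_1 - 1$, I get $V_\la \into V_\mu \otimes V \into V^{\otimes N(\mu)} \otimes V = V^{\otimes (n - \la_1)}$, whence $N(\la) \le n - \la_1$. Combined with the lower bound, this yields the claimed equality.

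The step I expect to require the most care is checking that $V_\la$ genuinely survives in $V_\mu \otimes V$ rather than being cancelled: because the operative formula $U \otimes V = \Ind \Res U - U$ carries a subtraction, one must verify that the removed term is $V_\mu$ and not $V_\la$, which is exactly what $\mu \neq \la$ guarantees. The only other point to watch is the existence of a removable corner outside the first row, which is precisely what the hypothesis $\la_1 < n$ ensures.
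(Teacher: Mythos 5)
Your proposal is correct and follows essentially the same route as the paper: the same auxiliary partition $\mu = [\la_1+1,\la_2,\dots,\la_k-1]$ obtained by moving the corner box of the last nonzero row up to the first row, the same induction on $n-\la_1$, and the same conclusion $V_\la \into V_\mu \otimes V \into V^{\otimes(n-\la_1)}$. The only difference is that you spell out, via the identity $U \otimes V = \Ind\Res U - U$ and the observation $\mu \neq \la$, why $V_\la$ survives in $V_\mu \otimes V$, a point the paper dispatches with the phrase ``by the combinatorial rule.''
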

\begin{proof}
The proof is by induction on $r = n- \la_1$, the case $r = 0$ being
clear. Let $\la = [\la_1,\dots,\la_s]$ with 
$\la_1 \geq \la_2 \geq \dots \geq \la_s > 0$, $n - \la_1 = r+1$.
Since $n-\la_1 > 0$ we have $s \geq 2$. We
consider $\mu \vdash n$ defined by $\mu_1 = \la_1 + 1$,
$\mu_i = \la_i$ for $1 < i < s$, and $\mu_{s} = \la_s - 1$.
By the induction assumption, $N(\mu) = r$ and $V_{\mu} \into V^{\otimes r}$.
One of the components of $V_{\mu} \otimes V$ is $V_{\la}$ by the combinatorial
rule, hence $V_{\la} \into V_{\mu} \otimes V\into V^{\otimes (r+1)}$
and the conclusion follows by induction.
\end{proof}

For a partition $\la = [\la_1 , \la_2 , \dots]$ of $n$ with
$\la_i \geq \la_{i+1}$, we define the partition $\theta(\la) = 
[\la_2,\la_3,\dots]$ of $n-\la_1$. In diagrammatic terms, $\theta(\la)$
is the diagram deduced from $\la$ by \emph{deletion of the first row}
(see figure \ref{figdemo}).
Proposition \ref{propniveau} can thus be reformulated as
$$
|\theta(\la)| = N(\la).
$$

\section{Dvir's formula}

For three partitions $\la,\mu,\nu$ of arbitrary size,
we let $L_{\la,\mu,\nu}$ denote the Littlewood-Richardson
coefficient (see e.g. \cite{FH}). A remarkable discovery of Y. Dvir is that the
graded ring structure $(\gr R(\SN),\odot)$ is basically
given by these coefficients.

We first recall how to compute $L_{\la,\mu,\nu}$ with $|\nu| = |\la| + |\mu|$
using the Littlewood-Richardson rule : $L_{\la,\mu,\nu}$ is the
number of ways $\la$, as a Young diagram, can be expanded into
$\nu$ by using a \emph{$\mu$-expansion}. Letting $\mu = [\mu_1,
\dots,\mu_k]$, such a $\mu$-expansion is obtained by first adding
$\mu_1$ boxes labelled by $1$, then $\mu_2$ boxes labelled by $2$, and so on (that is,
at the $r$-th step we add $\mu_{r}$ boxes labelled $r$ to the preceedingly obtained diagram) 
so that
\begin{enumerate}
\item at each step, one still has a Young diagram
\item the labels strictly increase in each column
\item when the labels are listed from right to left in each row and
starting with the top row, we have the following property. For each $t \in [1, |\mu|]$,
the following holds : each label $p$ occurs at least as many times
as the label $p+1$ (when it exists) in the first $t$ entries.
\end{enumerate}
As an example, see figure \ref{figureexpansions} for the list of the $[2,2]$-expansions of
$[2,1,1]$ and figure \ref{figureexLR} for the two expansions leading to
$L_{[2,1],[2,1],[3,2,1]} = 2$. The reader can find in \cite{FH} other examples and
further details on this combinatorics.

\begin{figure}
\begin{center}
\includegraphics{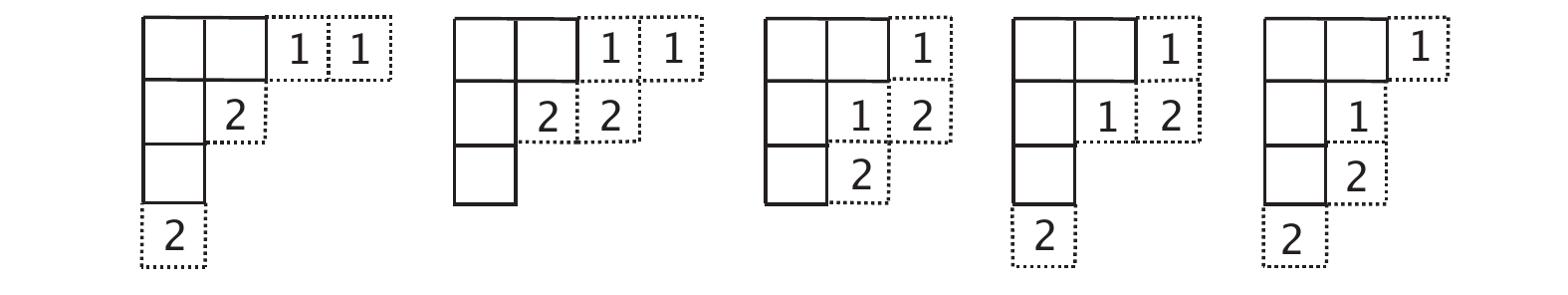}
\end{center} 
\caption{The $[2,2]$-expansions of
$[2,1,1]$}
\label{figureexpansions}
\end{figure}
\begin{figure}
\begin{center}
\includegraphics{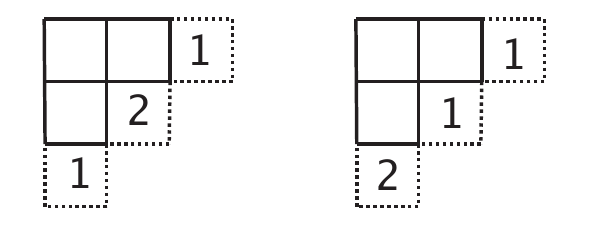}
\end{center} 
\caption{$L_{[2,1],[2,1],[3,2,1]} = 2$}
\label{figureexLR}
\end{figure}

For $\la, \mu,\nu \vdash n$, we let $C_{\la \mu \nu}$
denote the structure constants $V_{\la} \otimes V_{\mu} =
\sum_{\nu} C_{\la \mu \nu} V_{\nu}$ of $R(\SN)$. These constants, whose study has been
initiated by Murnaghan (1938), are notoriously complicated to understand.

For a partition $\la = [\la_1,\la_2,\dots]$ with $\la_i \geq \la_{i+1}$,
of $n$, define the partition $\theta(\la) = [\la_2,\la_3,\dots]$
of $n - \la_1$, and 
let $d(\la) = |\theta(\la)| = \la_2 + \la_3 + \dots = n- \la_1$.
By proposition \ref{propniveau} above we have $d(\la) = N(\la) = \min \{ r \geq 0 \ | \ V_{\la} \into
V^{\otimes r} \}$. In particular $C_{\la,\mu,\nu} = 0$ whenever $d(\nu) > d(\la) + d(\mu)$.
Dvir's formula can be stated as follows

\begin{theo} (Dvir \cite{DVIR}, theorem 3.3) Let $\la,\mu,\nu$ be partitions of $n$ such that
$d(\la) + d(\mu) = d(\nu)$.
Then $C_{\la,\mu,\nu} = L_{\theta(\la),\theta(\mu),\theta(\nu)}$.
\end{theo}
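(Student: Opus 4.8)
My plan is to transport the whole statement to the ring of symmetric functions through the Frobenius characteristic map $\mathrm{ch}$, which sends $V_\la$ to the Schur function $s_\la$ and converts the tensor (Kronecker) product into the internal product $*$ and the induction product into the ordinary product. In this language the two quantities to be compared are $C_{\la,\mu,\nu} = \langle s_\la * s_\mu, s_\nu\rangle$ and $L_{\theta(\la),\theta(\mu),\theta(\nu)} = \langle s_{\theta(\la)}\, s_{\theta(\mu)}, s_{\theta(\nu)}\rangle$, the latter being a structure constant for ordinary multiplication of Schur functions. The subadditivity $N(\rho_1\otimes\rho_2)\leq N(\rho_1)+N(\rho_2)$ recalled above says exactly that $d(\nu)\leq d(\la)+d(\mu)$ always, so the hypothesis places us at the extreme depth; the content of the theorem is that at this extreme the complicated internal product degenerates into the ordinary product of the first-row-deleted partitions.

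The engine of the argument is the formula $U\otimes V=(\Ind\Res U)-U$ established above. Reading it in the graded ring $(\gr R(\SN),\odot)$, I would show that multiplication by the degree-one class of $V=V_{[n-1,1]}$ acts as the Pieri operator $s_{\theta(\la)}\mapsto s_{(1)}\,s_{\theta(\la)}$. Indeed, $\Ind\Res V_\la$ is the sum of the $V_\mu$ obtained by moving one box of $\la$, together with $\delta(\la)$ copies of $V_\la$; subtracting $V_\la$ and keeping only the constituents of depth $d(\la)+1$, one checks that the surviving $\mu$ are exactly those obtained by lifting a box out of the first row and dropping it into a lower row, and that these are in bijection with the ways of adjoining a single box $\square$ to $\theta(\la)$. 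Every other box-move, as well as the diagonal term, leaves the depth at most $d(\la)$ and is killed in $\gr R(\SN)$. As sanity checks this already yields the cases $\mu=\un$ (where $\theta(\mu)=\emptyset$) and, more generally, $\mu=\Lambda^k V$: there $\theta(\Lambda^k V)=[1^k]$ and the asserted equality $C_{\la,[n-k,1^k],\nu}=L_{\theta(\la),[1^k],\theta(\nu)}$ reduces to the dual Pieri (vertical-strip) rule, which one reads off directly from the top-depth part of $\Lambda^k V\otimes V_\la$.

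To reach an arbitrary $\mu$ I would promote this base computation to the statement that the additive map $\Theta\colon[V_\la]\mapsto s_{\theta(\la)}$ intertwines $\odot$ with ordinary multiplication of Schur functions; Dvir's formula is then simply the coefficient of $s_{\theta(\nu)}$ in $s_{\theta(\la)}s_{\theta(\mu)}$, read off at the top depth $d(\nu)=d(\la)+d(\mu)$. The promotion proceeds by induction on $d(\mu)$, and for fixed $d(\mu)$ by descending dominance order: using the Pieri rule one writes $s_{\theta(\mu)}=s_{(1)}s_\alpha-\sum_{\beta}s_\beta$, where $\alpha$ is $\theta(\mu)$ with a suitable inner corner removed and the $\beta$ are the other one-box extensions of $\alpha$, strictly higher than $\theta(\mu)$; applying the already-known action of $[V]$, the associativity and commutativity of $\odot$, and the inductive hypothesis for the $s_\beta$, one solves for the image of $[V_\mu]$ and confirms multiplicativity. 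The main obstacle is precisely this last step: one must show that $\Theta$ is a genuine ring homomorphism, i.e. that the lower-depth debris in the Kronecker product never pollutes the top-depth coefficient and that multiplicativity against the single generator $[V]$ really propagates through the two-layered induction. A more symmetric and essentially equivalent route, which is the one Dvir takes, bypasses this bookkeeping by computing $\langle\chi^\la\chi^\mu,\chi^\nu\rangle$ directly: one restricts $\chi^\nu$ to the Young subgroup $\mathfrak{S}_{n-d}\times\mathfrak{S}_{d}$ with $d=d(\nu)$ and isolates, via Frobenius reciprocity, the contribution of the free orbit of $(\C^n)^{\otimes d}$, which reproduces $\langle\chi^{\theta(\nu)},\chi^{\theta(\la)}\cdot\chi^{\theta(\mu)}\rangle$; here the delicate point is showing that only the free orbit contributes at the extreme depth. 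Either way, the real work is the exact control of how depth behaves under the operations, not any single identity.
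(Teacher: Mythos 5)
The paper does not prove this statement: it is imported verbatim from Dvir's article (Theorem~3.3 there), so there is no internal argument to compare yours against. Judged on its own terms, your proposal is a strategy outline rather than a proof, and it contains a genuine gap beyond the obstacles you yourself flag as unresolved. The part that does work is the base computation: from $U \otimes V = (\Ind \Res U) - U$ one really does get that, at top depth, $V_\la \odot V$ is the sum of the $V_\nu$ with $\theta(\nu)$ a one-box extension of $\theta(\la)$ (subject to $\nu$ still being a partition of $n$), which is the single-box Pieri rule and the easiest special case of Dvir's formula.

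The gap is in the promotion to arbitrary $\mu$. Your induction writes $s_{\theta(\mu)} = s_{(1)} s_\alpha - \sum_\beta s_\beta$ with $\alpha$ a one-box reduction of $\theta(\mu)$ and claims the other extensions $\beta$ of $\alpha$ are strictly higher than $\theta(\mu)$ in dominance, so that descending dominance induction closes the loop. This is false: for $\theta(\mu) = [2,2]$ the only choice is $\alpha = [2,1]$, whose other extensions are $[3,1]$ (higher) and $[2,1,1]$ (lower); worse, the recursion genuinely cycles, since $[2,2]$ requires $[3,1]$ and every admissible choice of corner for $[3,1]$ requires either $[2,2]$ or $[4]$, which in turn requires $[3,1]$. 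Multiplication by $s_{(1)}$ alone cannot generate all Schur functions degree by degree; one needs top-depth Pieri rules for all the $e_k = \Lambda^k V$ (or $h_k$) together with the dual Jacobi--Trudi expansion, whose transition matrix is unitriangular for dominance. But your treatment of exactly that case --- the assertion that the top-depth part of $\Lambda^k V \otimes V_\la$ ``reads off'' as the vertical-strip rule --- is stated without argument, and it is itself a nontrivial special case of Dvir's theorem (computing $\Lambda^k V \otimes V_\la$ requires more than the identity $U \otimes V = \Ind\Res U - U$). Combined with your own admission that the multiplicativity of $\Theta$ and the ``only the free orbit contributes'' step are left open, the proposal identifies the right landscape but does not contain a proof.
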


In particular we get, inside $\gr R(\SN)$, the following formula :
$$
V_{\la} \odot V_{\mu} = \sum_{d(\nu) = d(\la) + d(\mu)} L_{\theta(\la),
\theta(\mu),\theta(\nu)} V_{\nu}.
$$

\section{The proof}

The main theorem is then an immediate consequence of the
following proposition. 
For the proof of this proposition, we will associate to
a Young diagram $\alpha = [\alpha_1,\alpha_2,\dots]$ its
interior $\alpha^{\circ}$ defined by the partition
$\alpha^{\circ}_i = \max(0,\alpha_i - 1)$, and its boundary
$\partial \alpha$ is
defined to be the ribbon made of the boxes in $\alpha$ which do
not belong to $\alpha^{\circ}$. The size $|\partial \alpha|$ of $\partial
\alpha$ (that is, its number of boxes) is clearly equal to
the number of rows in $\alpha$, or in other terms to the number of nonzero
parts of the partition $\alpha$.

\begin{prop} The ring $(\gr R(\SN),\odot)$ is generated by the $\Lambda^k V$,
$0 \leq k \leq n-1$.
\end{prop}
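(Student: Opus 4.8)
The plan is to extract from Dvir's formula the effect of $\odot$-multiplying by a single hook, and then to build up an arbitrary $V_\la$ by peeling off the boundary $\partial\theta(\la)$ one column at a time. Write $\beta=\theta(\la)$. Since $\Lambda^k V=V_{[n-k,1^k]}$ satisfies $\theta(\Lambda^k V)=[1^k]$ and $d(\Lambda^k V)=k$, Dvir's formula gives, for every $\mu\vdash n$,
\[
\Lambda^k V\odot V_\mu=\sum_{d(\nu)=d(\mu)+k}L_{[1^k],\theta(\mu),\theta(\nu)}\,V_\nu,
\]
and the coefficient $L_{[1^k],\theta(\mu),\theta(\nu)}$ is precisely the Pieri coefficient of the product $s_{\theta(\mu)}\cdot e_k$ of symmetric functions: it equals $1$ when $\theta(\nu)$ is obtained from $\theta(\mu)$ by adding a vertical $k$-strip and $0$ otherwise.

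First I would record the combinatorial dictionary. The map $\la\mapsto\theta(\la)$ is a bijection from $\{\la\vdash n : d(\la)=K\}$ onto the \emph{admissible} partitions $\{\beta\vdash K : \beta_1\le n-K\}$, and the admissible set is a down-set for the dominance order: if $\beta$ is admissible and $\gamma\trianglelefteq\beta$, then $\gamma_1\le\beta_1\le n-K$, so $\gamma$ is admissible too. This observation is what makes the whole argument run, and I expect it to be the main point to get right: it guarantees that the symmetric-function identities below \emph{descend faithfully} to $\gr R(\SN)$, with none of the dominance-lower terms being lost when Dvir's formula restricts the support to genuine partitions of $n$.

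Next comes the key one-column step. Fix $\la$ with $\beta=\theta(\la)\ne\emptyset$, let $\beta^\circ$ be its interior and $k=|\partial\beta|=\beta'_1$ the height of its first column. By the Pieri rule, $s_{\beta^\circ}\cdot e_k=\sum_\gamma c_\gamma\,s_\gamma$, where $\gamma$ ranges over the vertical $k$-strip extensions of $\beta^\circ$; refilling the first column (one box in each of the top $k$ rows) returns $\beta$, and a short estimate on the partial sums of $\gamma$ shows that $\beta$ is the unique dominance-maximal term, occurring with coefficient $1$, every other $\gamma$ satisfying $\gamma\triangleleft\beta$. Combining this with the displayed formula and the down-set property gives, inside $\gr R(\SN)$, the clean identity
\[
\Lambda^{|\partial\beta|}V\odot V_{\theta^{-1}(\beta^\circ)}=V_\la+\sum_{\gamma\triangleleft\beta}c_\gamma\,V_{\theta^{-1}(\gamma)},
\]
in which every $\theta^{-1}(\gamma)$ is again a genuine partition of $n$.

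Finally I would conclude by a double induction, outer on $K=d(\la)$ and inner on the dominance order of $\beta=\theta(\la)$. The base cases are immediate: $V_{[n]}=\un=\Lambda^0 V$, while the dominance-minimal admissible shape $[1^K]$ equals $\theta(\Lambda^K V)$. For the inductive step I solve the last identity for $V_\la$: the factor $V_{\theta^{-1}(\beta^\circ)}$ lies in the subring generated by the hooks by the outer induction (as $|\beta^\circ|<K$), the hook $\Lambda^{|\partial\beta|}V$ is itself a generator (since $|\partial\beta|=\beta'_1\le K\le n-1$), and each correction term $V_{\theta^{-1}(\gamma)}$ with $\gamma\triangleleft\beta$ lies there by the inner induction; hence so does $V_\la$. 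As the $V_\la$ form a $\Z$-basis of $\gr R(\SN)$, this proves the proposition. The genuinely delicate point is the faithful-descent/down-set step; the Pieri leading-term computation, though it must be checked, is routine.
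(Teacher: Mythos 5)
Your proposal is correct and follows essentially the same route as the paper: both extract $V_\la$ as the leading term of $V_{\theta^{-1}(\beta^\circ)}\odot\Lambda^{|\partial\beta|}V$ via Dvir's formula and the Pieri rule for $e_k$, inside a double induction whose outer parameter is $d(\la)$. The only difference is the inner induction -- you order the correction terms by dominance of $\theta(\nu)$, whereas the paper observes that each such $\theta(\nu)$ has strictly more rows than $\theta(\la)$ and inducts on $|\theta(\la)^{\circ}|$; both organizations are valid and rest on the same Pieri computation.
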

\begin{proof}
Recall that
$\Lambda^k V = V_{[n-k,1^k]}$, and note that $\theta([n-k,1^k]) = [1^k]$.
In particular $N(\Lambda^k V) = k$.
We identify each $V_{\la}$ with its image in $\gr R(\SN)$ and
let $Q$ denote the subring of $\gr R(\mathfrak{S}_n)$
generated by the $\Lambda^k V$. We prove that $V_{\la} \in
Q$ for all partition $\la$ of $n$ ($\la \vdash n$),
by induction on $d(\la) = |\theta(\la)|$. We have $d(\la) = 0
\Rightarrow \la = [n]  \Rightarrow V_{\la} = \Lambda^0 V$
and $d(\la) = 1
\Rightarrow \la = [n-1,1]  \Rightarrow V_{\la} = \Lambda^1 V$,
hence $V_{\la} \in Q$ if $d(\la) \leq 1$. We thus assume
$d(\la) \geq 2$ and that $V_{\mu} \in Q$ for all partitions $\mu$
with $d(\mu) < d(\la)$.

Letting $\alpha = \theta(\la)$ we use another induction on $|\alpha^{\circ}|$.
Note that $|\alpha^{\circ}| \leq |\alpha|$, with equality only if
$\alpha = \emptyset$. More generally, the case
$|\alpha^{\circ} | = 0$ means that
$V_{\la} = \Lambda^{|\partial \alpha|} V \in Q$, so we can assume
$|\alpha^{\circ}| \geq 1$.

\begin{figure}
\begin{center}
\includegraphics{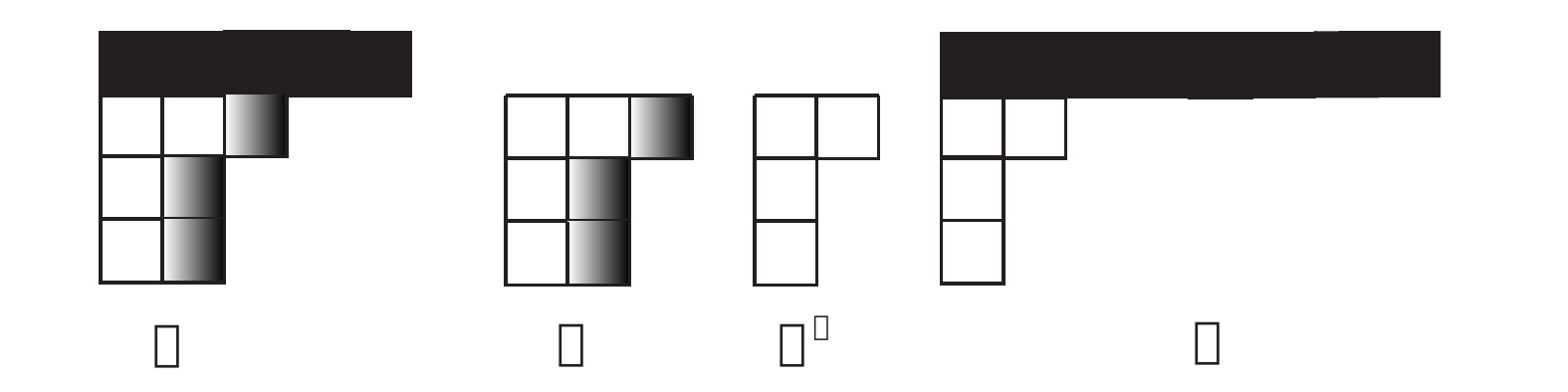}
\end{center} 
\caption{$\alpha = \theta(\la)$, $\alpha^{\circ}$ and $\mu$ for $\la = [5,3,2,2]$}
\label{figdemo}
\end{figure}

We let $r = |\partial \alpha| = |\alpha| - |\alpha^{\circ}|$.
Since $d(\la) \geq 2$ we have $\theta(\la) \neq 0$ and in particular $r \geq 1$.
Moreover $\la_1 = n- |\alpha| \geq \la_2$, hence $n - |\alpha^{\circ}| \geq \alpha_1 \geq \alpha^{\circ}_1$.
We thus can introduce the partition $\mu = [n-|\alpha^{\circ}|,\alpha^{\circ}_1,
\dots,]$ of $n$ (see figure \ref{figdemo} for an example) and consider $M = V_{\mu} \odot \Lambda^r V \in \gr R(\mathfrak{S}_n)$.
Since $|\alpha^{\circ}| < |\alpha|$ we have $d(\mu) < d(\la)$
hence $V_{\mu} \in Q$ by the first induction assumption so $M \in Q$.
Let $\nu \vdash n$
such that $M$ has nonzero coefficient on $V_{\nu}$.
We have $d(\nu) = d(\mu) + r = d(\la)$, hence
$\nu_1 = n-|\alpha| = \la_1$, and this coefficient is
$L_{\alpha^{\circ},[1^r],\theta(\nu)}$
by Dvir formula.

By the Littlewood-Richardson rule, this coefficient
$L_{\alpha^{\circ},[1^r],\theta(\nu)}$
is the number of ways that one can add boxes labelled $1,\dots,r$ on the
Young diagram of $\alpha^{\circ}$ with at most one box on each row
(with the graphic convention that $\alpha^{\circ}$
has $\alpha^{\circ}_i$ boxes on the $i$-th row),
the labels increasing along the rows,
and such that the augmented diagram corresponds to $\theta(\nu)$. We thus
clearly have $L_{\alpha^{\circ},[1^r],\alpha} = 1$, this corresponding to
adding a box marked $i$ on the $i$-th row for each $1 \leq i \leq r$.
Moreover,
if $L_{\alpha^{\circ},[1^r],\theta(\nu)}$
is nonzero, then either $\theta(\nu)$ has (strictly) more nonzero
parts than $\alpha$,
which means that
one box has been added to the empty $(r+1)$-st row, and in that
case we know that $V_{\nu} \in Q$ by the second
induction hypothesis 
(as this means $|\partial \theta(\nu)| > r = |\partial \alpha|$, hence
$|\theta(\nu)^{\circ}| < |\alpha^{\circ}|$ since $|\alpha| = |\theta(\nu)|$); 
or, the $r$ boxes have been added to the first 
row,
which implies $\theta(\nu) = \alpha$ hence $\nu = \la$. We thus get $M \equiv V_{\la}$ modulo $Q$,
$V_{\la} \in Q$ and the conclusion follows by induction.
\end{proof}

A careful look at the above proof shows that we proved
a more technical but also more precise result. For $\la,\mu \in \mathfrak{S}_n$,
we define $\la \prec \mu$ if either $N(\la) < N(\mu)$,
or $N(\la) = N(\mu)$ and $|\theta(\la)^{\circ}| < |\theta(\mu)^{\circ}|$,
and we denote by $R_{\la}$ (resp. $\overline{R}_{\la}$) the
$\Z$-submodule of $R(\mathfrak{S}_n)$ (resp. $\gr R(\mathfrak{S}_n)$)
spanned by the $\kappa \in \Irr(\mathfrak{S}_n)$ with $\kappa \prec \lambda$.
The above proof actually shows the following.

\begin{prop} For every $\la \in \Irr(\mathfrak{S}_n) \setminus \{ \un \}$,
there exists $\hat{\la} \in \Irr(\mathfrak{S}_n)$
with $\hat{\la} \prec \la$ and $k \in \Z_{\geq 0}$ such that
$\hat{\la} \odot \Lambda^k V \in \la + \overline{R}_{\la}$.
\end{prop}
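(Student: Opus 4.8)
The plan is to re-read the proof of the preceding Proposition as a statement about the order $\prec$ rather than as a bare induction. Fix $\la \in \Irr(\mathfrak{S}_n) \setminus \{\un\}$ and set $\alpha = \theta(\la)$; since $\la \neq \un = [n]$ we have $\alpha \neq \emptyset$, so $r := |\partial\alpha| \geq 1$. As in the proof above I would take $\hat{\la}$ to be the partition $\mu = [\,n - |\alpha^{\circ}|,\, \alpha^{\circ}_1,\, \alpha^{\circ}_2,\, \dots\,]$, which is genuinely a partition of $n$ because $\la_1 \geq \la_2$ forces $n - |\alpha^{\circ}| \geq \alpha^{\circ}_1$, and I would take $k = r$. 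The object to study is then $M = \hat{\la} \odot \Lambda^r V \in \gr R(\mathfrak{S}_n)$.

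First I would check that $\hat{\la} \prec \la$. By construction $\theta(\hat{\la}) = \alpha^{\circ}$, so $N(\hat{\la}) = d(\hat{\la}) = |\alpha^{\circ}|$, while $|\alpha^{\circ}| < |\alpha| = d(\la) = N(\la)$ because $\alpha \neq \emptyset$. Thus $N(\hat{\la}) < N(\la)$, which is the first clause in the definition of $\prec$, giving $\hat{\la} \prec \la$.

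Next I would expand $M$ by Dvir's formula. Since $N(\Lambda^r V) = r$ and $\theta(\Lambda^r V) = [1^r]$, every $V_{\nu}$ occurring in $M$ satisfies $d(\nu) = d(\hat{\la}) + r = |\alpha^{\circ}| + r = |\alpha| = d(\la)$, and its coefficient equals $L_{\alpha^{\circ},[1^r],\theta(\nu)}$. In particular all such $\nu$ have $N(\nu) = N(\la)$, so whether $\nu \prec \la$ is decided purely by the second clause, i.e.\ by the sign of $|\theta(\nu)^{\circ}| - |\theta(\la)^{\circ}|$. Here the useful identity is $|\beta^{\circ}| = |\beta| - (\#\text{nonzero parts of }\beta)$, which (together with $|\theta(\nu)| = |\alpha|$) shows that $|\theta(\nu)^{\circ}| < |\alpha^{\circ}|$ holds exactly when $\theta(\nu)$ has strictly more nonzero rows than $\alpha$.

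The combinatorial heart, and the step I expect to be the main obstacle, is the analysis of $L_{\alpha^{\circ},[1^r],\theta(\nu)}$. By the Littlewood--Richardson rule for the single-column shape $[1^r]$ (equivalently the dual Pieri rule) this coefficient is $1$ if $\theta(\nu)/\alpha^{\circ}$ is a vertical $r$-strip and $0$ otherwise. A vertical $r$-strip adds one box to each of $r$ distinct rows, hence meets at least $r$ rows; I would argue that if it meets exactly $r$ rows then it must place a box in each of the rows $1,\dots,r$, forcing $\theta(\nu) = \alpha$ and so $\nu = \la$ with coefficient $1$, whereas if it meets more than $r$ rows then $\theta(\nu)$ has more rows than $\alpha$, i.e.\ $|\theta(\nu)^{\circ}| < |\alpha^{\circ}|$ and therefore $\nu \prec \la$. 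This dichotomy is exactly the content of the previous proof, now phrased in terms of $\prec$. Assembling the pieces, $M = V_{\la} + \sum_{\nu \prec \la} L_{\alpha^{\circ},[1^r],\theta(\nu)}\,V_{\nu}$; identifying $\la$ with the class of $V_{\la}$, this reads $\hat{\la} \odot \Lambda^k V \in \la + \overline{R}_{\la}$ with $\hat{\la} \prec \la$ and $k = r$, as required.
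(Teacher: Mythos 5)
Your proposal is correct and follows essentially the same route as the paper: the paper derives this proposition by re-reading the proof of the generation statement for $(\gr R(\SN),\odot)$, taking $\hat{\la}=\mu=[n-|\alpha^{\circ}|,\alpha^{\circ}_1,\dots]$ and $k=r=|\partial\alpha|$, and using Dvir's formula together with the dual Pieri/vertical-strip analysis of $L_{\alpha^{\circ},[1^r],\theta(\nu)}$ exactly as you do. Your explicit verification that $\hat{\la}\prec\la$ via the first clause and that every other $\nu$ occurring satisfies $\nu\prec\la$ via the second clause is precisely the ``careful look'' the paper alludes to.
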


Since $\mathcal{F}_{N(\kappa)-1}(\mathfrak{S}_n) \subset R_{\kappa}$ this
immediately implies
\begin{cor} \label{corprod}
For every $\la \in \Irr(\mathfrak{S}_n) \setminus \{ \un \}$,
there exists $\hat{\la} \in \Irr(\mathfrak{S}_n)$
with $\hat{\la} \prec \la$ and $k \in \Z_{\geq 0}$ such that
$\hat{\la} \otimes \Lambda^k V \in \la + R_{\la}$.
\end{cor}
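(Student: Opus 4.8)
The plan is to transfer the graded-ring congruence furnished by the preceding proposition, which takes place in $(\gr R(\SN),\odot)$, up to a congruence in the filtered ring $R(\SN)$ for the genuine tensor product $\otimes$. The only mechanism required is the defining compatibility between the filtration $(\mathcal{F}_r)$ and its associated graded ring: if $a \in \mathcal{F}_p$ and $b \in \mathcal{F}_q$ then $a \otimes b \in \mathcal{F}_{p+q}$, and the class of $a \otimes b$ in $\mathcal{F}_{p+q}/\mathcal{F}_{p+q-1}$ coincides with $\overline{a}\odot\overline{b}$, the $\odot$-product of the leading classes of $a$ and $b$. Under the identification of each $V_{\kappa}$ with its image in $\gr R(\SN)$, the irreducible $V_{\kappa}$ is homogeneous of degree $N(\kappa)$, and these classes form a homogeneous $\Z$-basis.

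First I would fix the pair $\hat{\la}, k$ produced by the proposition and determine degrees. In $\gr R(\SN)$ the element $\hat{\la}$ is homogeneous of degree $N(\hat{\la})$ and $\Lambda^k V$ of degree $k$, so $\hat{\la}\odot\Lambda^k V$ is homogeneous of degree $N(\hat{\la})+k$. Since this product is congruent to $V_{\la}$ modulo $\overline{R}_{\la}$ while $V_{\la}$ is homogeneous of degree $N(\la)$ and does not lie in $\overline{R}_{\la}$ (as $\la\not\prec\la$), homogeneity forces $N(\hat{\la})+k = N(\la)$. The congruence therefore lives entirely in the single graded piece of degree $N(\la)$, so the error term is a $\Z$-combination $\sum_{\kappa}c_{\kappa}V_{\kappa}$ of classes with $\kappa\prec\la$ and $N(\kappa)=N(\la)$.

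Next I would lift this to $R(\SN)$. From $\hat{\la}\in\mathcal{F}_{N(\hat{\la})}$ and $\Lambda^k V\in\mathcal{F}_k$ the compatibility above gives $\hat{\la}\otimes\Lambda^k V\in\mathcal{F}_{N(\la)}$ with image $\hat{\la}\odot\Lambda^k V = V_{\la}+\sum_{\kappa}c_{\kappa}V_{\kappa}$ in $\mathcal{F}_{N(\la)}/\mathcal{F}_{N(\la)-1}$. Lifting this equality of classes yields $\hat{\la}\otimes\Lambda^k V = V_{\la}+\sum_{\kappa}c_{\kappa}V_{\kappa}+f$ for some $f\in\mathcal{F}_{N(\la)-1}$. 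The finite sum already belongs to $R_{\la}$ by definition; and $f$ belongs to $R_{\la}$ too, because every irreducible occurring in $\mathcal{F}_{N(\la)-1}$ has level $<N(\la)$, hence satisfies $\kappa\prec\la$ through the first clause of $\prec$, which is exactly the inclusion $\mathcal{F}_{N(\la)-1}\subset R_{\la}$. Therefore $\hat{\la}\otimes\Lambda^k V\in V_{\la}+R_{\la}$, as desired. I expect no real obstacle: the whole argument is the routine passage from a ring to its associated graded, and the single point needing care is tracking graded degrees so that every term other than $V_{\la}$ is seen to land in $R_{\la}$.
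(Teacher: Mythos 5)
Your argument is correct and is exactly the paper's own (one-line) deduction, namely that the graded congruence from the preceding proposition lifts to $R(\SN)$ because the discrepancy between $\otimes$ and $\odot$ lies in $\mathcal{F}_{N(\la)-1}\subset R_{\la}$; you have merely written out in full the degree bookkeeping that the paper leaves implicit. No gap.
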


\section{An application}

One can use this result to give a proof of the well-known fact
that all complex linear representations of the symmetric group
can actually be realized over $\Q$. We first recall the following
lemma.

\begin{lemma} \label{lemmult1} Let $G$ be a finite group,
$\k$ a number field, $\rho : G \to \GL_N(\k)$
a linear representation of $G$ defined over $\k$,
and $\rho_{\C} : G \to \GL_N(\C)$ its complexification. If
$\varphi$ is an irreducible subrepresentation of $\rho_{\C}$
occuring with multiplicity one whose character takes values
in $\k$, then $\varphi$ can be realized over $\k$.
\end{lemma}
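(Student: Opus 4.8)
The plan is to produce an explicit idempotent, defined over $\k$, that projects $\rho_{\C}$ onto the copy of $\varphi$ it contains, and then to read off a $\k$-form of $\varphi$ as the image of this idempotent on $\k^N$. First I would write $\chi$ for the character of $\varphi$ and form the usual projector onto the $\varphi$-isotypic component,
$$
e = \frac{\dim \varphi}{|G|} \sum_{g \in G} \overline{\chi(g)}\, \rho_{\C}(g).
$$
Standard character theory tells us that $e$ is a $G$-equivariant idempotent of $\C^N$ whose image is the $\varphi$-isotypic part. Here I would invoke the multiplicity-one hypothesis: it guarantees that $e(\C^N)$ is a single copy of $\varphi$, hence an irreducible subrepresentation isomorphic to $\varphi$.

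The crux of the argument is to check that $e$ in fact has entries in $\k$. For this I would use the identity $\overline{\chi(g)} = \chi(g^{-1})$, valid for any character since the eigenvalues of $\rho_{\C}(g)$ are roots of unity, to rewrite $e = \frac{\dim \varphi}{|G|}\sum_{g \in G} \chi(g^{-1})\rho(g)$. Now every ingredient lies in $\k$: the character values $\chi(g^{-1})$ by hypothesis, the matrices $\rho(g)$ because $\rho$ is defined over $\k$, and the scalar $(\dim \varphi)/|G|$ because it lies in $\Q \subset \k$. Thus $e \in M_N(\k)$, and since $e$ commutes with every $\rho(g)$, its image $W = e(\k^N)$ is a subrepresentation of $\rho$ over $\k$.

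Finally I would identify $W$ as the desired $\k$-form. The rank of $e$ is insensitive to the extension $\k \subset \C$, so $\dim_{\k} W = \dim_{\C} e(\C^N) = \dim \varphi$, and extending scalars yields $W \otimes_{\k} \C = e(\C^N) \simeq \varphi$; since this complexification is irreducible, $W$ is already an irreducible $\k$-representation realizing $\varphi$.

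The step requiring genuine care---and the reason the statement is phrased with a multiplicity-one hypothesis---is the passage from the $\k$-rational idempotent to an honest $\k$-form of the single representation $\varphi$. Even without multiplicity one the idempotent $e$ has $\k$-entries, but its image would then be a sum of several copies of $\varphi$, and isolating one copy over $\k$ is controlled by the division algebra $\End_G(W)$ and its Schur index, which may fail to be trivial. Multiplicity one is exactly what collapses this difficulty, forcing $W$ itself to be irreducible over $\k$ and giving the conclusion at once.
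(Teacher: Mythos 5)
Your proof is correct and follows exactly the paper's route: form the isotypic projector $\frac{\dim \varphi}{|G|}\sum_{g}\overline{\chi(g)}\rho(g)$, observe that under the hypotheses it is an endomorphism of $\k^N$, and take its image as the $\k$-form. You have merely spelled out the details (the identity $\overline{\chi(g)}=\chi(g^{-1})$, the rank argument, and the role of multiplicity one) that the paper leaves implicit.
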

\begin{proof} This is an immediate consequence of the fact that the projection on the
$\varphi$-isotopic component of $\rho_{\C}$ is
given by $\frac{\dim \varphi}{|G|} \sum_{g \in G} \overline{\chi(g)}
\rho(g)$ (see e.g. \cite{FH} (2.32)), which is an endomorphism of $\k^N$
under our assumptions.
\end{proof}

We now can deduce the following well-known result.

\begin{theo} \label{theorealQ} Every complex linear representation of $\mathfrak{S}_n$
can be realized over $\Q$.
\end{theo}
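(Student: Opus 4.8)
The plan is to prove Theorem \ref{theorealQ} by induction on the order $\prec$ introduced above, using Corollary \ref{corprod} as the main engine together with Lemma \ref{lemmult1}. The base case is the trivial representation $\un = V_{[n]}$, which is obviously defined over $\Q$. For the inductive step, fix $\la \in \Irr(\SN) \setminus \{\un\}$ and assume that every $\kappa \prec \la$ can be realized over $\Q$; in particular, by Corollary \ref{corprod}, there exist $\hat{\la} \prec \la$ and $k \geq 0$ with $\hat{\la} \otimes \Lambda^k V \in \la + R_{\la}$ in $R(\SN)$.

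The first step is to assemble a rational representation whose complexification contains $V_{\la}$ exactly once. By the inductive hypothesis $\hat{\la}$ is realizable over $\Q$, and the hook $\Lambda^k V$ is manifestly rational since $V = V_{[n-1,1]}$ is the natural permutation representation minus the trivial one (so its exterior powers are defined over $\Q$, indeed over $\Z$). Hence the tensor product $\rho = \hat{\la} \otimes \Lambda^k V$ is a representation defined over $\Q$. The identity $\hat{\la} \otimes \Lambda^k V \in \la + R_{\la}$ tells me that, as a virtual character, $\rho_\C = V_{\la} + \sum_{\kappa \prec \la} m_\kappa V_\kappa$ with nonnegative integers $m_\kappa$; crucially, the coefficient of $V_{\la}$ is exactly $1$, so $V_{\la}$ occurs in $\rho_\C$ with multiplicity one.

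The second step is to invoke Lemma \ref{lemmult1} with $\k = \Q$ and $\varphi = V_{\la}$. The lemma has two hypotheses: that $\varphi$ occur with multiplicity one (just verified), and that its character take values in $\Q$. The latter is the classical fact that the characters of $\SN$ are integer-valued (conjugacy classes of $\SN$ are rational, i.e. $g$ and $g^m$ are conjugate whenever $\gcd(m,|g|)=1$, so all character values lie in $\Q$ and in fact in $\Z$); I would cite this from \cite{FH}. With both hypotheses met, Lemma \ref{lemmult1} yields that $V_{\la}$ is realizable over $\Q$, completing the induction.

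The only real subtlety to watch is that the relation from Corollary \ref{corprod} lives a priori in the graded object's lift and must be read correctly as an honest multiplicity-one statement in $R(\SN)$: the submodule $R_{\la}$ is spanned by the $\kappa \prec \la$ with \emph{nonnegative} coefficients coming from a genuine tensor product of actual representations, so $\rho_\C$ really is a true (not merely virtual) representation with $V_{\la}$ appearing once and every other constituent strictly $\prec \la$. Once this bookkeeping is in place, the argument is a clean induction; the rationality of the hooks and the integrality of $\SN$-characters are the two external inputs, and neither presents any difficulty.
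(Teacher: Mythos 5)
Your proof is correct and follows essentially the same route as the paper: well-founded induction on $\prec$ driven by Corollary \ref{corprod}, with Lemma \ref{lemmult1} closing each inductive step. The only (harmless) divergence is that you import the classical integrality of $\mathfrak{S}_n$-characters to verify the rationality hypothesis of Lemma \ref{lemmult1}, whereas the paper extracts it from the induction itself, noting that the character of $\la$ is the difference of the characters of $M = \hat{\la} \otimes \Lambda^k V$ and of $M/\la$, both rational by the induction hypothesis.
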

\begin{proof}
We use first that the natural permutation module $\C^n$ is obviously
realizable over $\Q$, and that $\C^n = \un + V$. This implies
that the character associated to $V$ is defined over $\Q$, hence
$V$ can be realized over $\Q$ by lemma \ref{lemmult1} (or, directly,
$V$ can be identified to the rational subspace $\{ (x_1,\dots,x_n) \in
\Q^n \ | \ x_1 + \dots + x_n = 0 \}$). It follows that all the
$\Lambda^r V$ can be realized over $\Q$. 

Since $\Irr(\mathfrak{S}_n)$ is clearly a well-founded set under $\prec$,
with minimal element $\un$,
one can now use this relation to prove our statement by
induction.

Let $\la \in \Irr(\mathfrak{S}_n)$.
Corollary \ref{corprod} implies
that there exists $\hat{\la} \prec \la$ and $k \in \Z_{\geq 0}$ such that
$M = \hat{\la} \otimes \Lambda^k$, which is realizable over $\Q$ by
our induction assumption, contains $\la$ with multiplicity 1, and has the
property that the quotient representation $M/\la$ is also realizable over $\Q$
by the same induction assumption. This proves that the character
of $\la$ takes values in $\Q$, and then that $\la$ is realizable over
$\Q$ by lemma \ref{lemmult1}. This concludes the proof.

\end{proof}

\section{Generalization attempts}

The symmetric group is an irreducible complex (pseudo-)reflection group.
Recall that such a group is a finite subgroup $W$ of $\GL(V)$
for $V$ some finite-dimensional complex vector
space acted upon irreductibly by $W$, with $W$ generated by its
reflections, namely elements of $\GL(V)$ which fix a hyperplane.
The dimension of $V$ is called the \emph{rank} of $W$.
%

For such a group, it is a classical
result of Steinberg that the representations $\Lambda^k V$
are irreducible (see e.g. \cite{BOURBAKI} ch. 5 \S 2 exercice 2),
and are thus natural generalization of \emph{hooks}.

Among other similarities, theorem \ref{theorealQ} admits
a natural generalization to these groups. Indeed, it can first be
shown that the representation $V$ can be realized over its character field $\k$
(i.e. the number field generated by the values taken by its character),
sometimes called its \emph{field of definition}. Moreover, it is a theorem
of M. Benard that \emph{every} representation of $W$ can
be realized over $\Q$ (see \cite{BENARD}, and also \cite{BESSIS}, \cite{MARINMICHEL}
for other proofs), thus providing a complete generalization of theorem
\ref{theorealQ}. We now investigate to what extent theorem \ref{maintheo}
could be generalized.

The irreducible complex reflection groups have been classified
by Shephard and Todd (see \cite{ST}). There is an infinite series $G(de,e,r)$
depending on three integral parameters $d,e,r$, plus 34 exceptions
$G_4,\dots,G_{37}$. For the representation theory of the $G(de,e,r)$
we refer to \cite{ARIKI}. 

Note that, for a given group with known character table, it is easy to
check by computer whether a given subset $\mathcal{B}$ of $\Irr(W)$ generates $R(W)$.
Indeed, the ring $R(W) = \Z \Irr(W)$ is a free $\Z$-module with basis
$\Irr(W)$ ; assume we are given a subset $\mathcal{B} \subset \Irr(W)$
with $\un \in \mathcal{B}$, and let $A$ denote the subring
of $R(W)$ generated by $\mathcal{B}$. The embedding
$R(W) \subset \End_{\Z} R(W) \simeq \End_{\Z} (\Z\Irr(W))$
identifies $A$ with the minimal $\Z$-submodule of $\Z \Irr(W)$
containing $\un_W$ which is stable under multiplication by $\mathcal{B}$.
This identifies $a \in A$ with $a.\un \in \Z \Irr(W)$.
Starting with the $\Z$-module $A_0 = \Z \un$ of rank 1,
multiplication by the elements of $\mathcal{B}$ iteratively
provides a sequence of submodules $A_0 \subset
A_1 \subset \dots$ which eventually stops at $A_{\infty} = A$
by noetherianity of the $\Z$-module $R(W)$.


If $W$ has rank 2, we are able to prove case-by-case the
following.

\begin{prop} If $W$ is an irreducible complex reflection group of rank 2, then
$R(W)$ is generated by $V$ and the 1-dimensional representations.
\end{prop}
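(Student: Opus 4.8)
The plan is to prove the statement by a case-by-case analysis running over the Shephard--Todd classification of irreducible complex reflection groups of rank $2$. These are the imprimitive groups $G(de,e,2)$ together with the exceptional groups $G_4,\dots,G_{22}$ (the rank-$2$ exceptionals on the Shephard--Todd list), so the claim amounts to checking, for each of these, that the subring of $R(W)$ generated by the reflection representation $V = \Lambda^1 V$, its exterior square $\Lambda^2 V$ (which is a one-dimensional representation $\det$), and all the other one-dimensional characters, exhausts $R(W)$. Since $\Lambda^2 V$ is already a linear character, the statement is equivalent to saying that $V$ together with $\mathrm{Lin}(W) = \Hom(W,\C^\times)$ generates $R(W)$.

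\medskip

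\noindent First I would isolate the structural mechanism that makes the argument work, so that the individual cases reduce to a bounded computation. The key observation is that tensoring by the one-dimensional characters permutes the irreducible characters of a fixed dimension among themselves, so $R(W)$ is generated by $V$ and $\mathrm{Lin}(W)$ as soon as every irreducible character of $W$ appears as a constituent of some word $V^{\otimes r} \otimes \chi$ with $\chi$ linear; and by the Burnside--Molien faithfulness argument recalled in \S 2, every irreducible does embed into some power $V^{\otimes r}$. The content of the proposition is therefore that the \emph{multiplicities} arrange themselves so that one can peel off the irreducibles one at a time, using the one-dimensional characters to correct. For the imprimitive series $G(de,e,2)$ one has an explicit description of $\Irr(W)$ (the one-dimensional characters, a family of two-dimensional characters indexed by pairs of characters of the cyclic part, and possibly some ramified two-dimensionals), and the tensor product $V \otimes (\text{2-dim})$ decomposes transparently into pieces whose linear-character twists are controlled; this should let me verify the claim uniformly in $d,e$ rather than group by group.

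\medskip

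\noindent For the exceptional groups $G_4,\dots,G_{22}$ I would simply fall back on the computer verification described at the end of \S 5: for each group I take $\mathcal{B}$ to be $\{\un\} \cup \mathrm{Lin}(W) \cup \{V\}$, form the sequence of $\Z$-submodules $A_0 \subset A_1 \subset \cdots$ generated by iterated multiplication by $\mathcal{B}$ starting from $\Z\un$, and check that the process stabilises at all of $R(W) = \Z\Irr(W)$. Because these are finitely many groups of small order with known character tables, this is a finite and mechanical check, and it justifies the ``case-by-case'' phrasing of the statement.

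\medskip

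\noindent The main obstacle I expect is the imprimitive series $G(de,e,2)$, precisely because it is an infinite family and so cannot be disposed of by a finite computation. The delicate point there is that the character table of $G(de,e,2)$ is genuinely more complicated when $e > 1$: the restriction of two-dimensional characters of $G(de,1,2)$ to the index-$e$ subgroup $G(de,e,2)$ can split, producing irreducibles that are not single tensor constituents of $V$ against a linear character in the naive way, so the clean ``peel one box at a time'' inductive scheme available for $\SN$ in \S 4 does not transport directly. Handling this will require either an explicit generating-word argument tuned to the fusion/splitting of characters under the cyclic quotient, or a Clifford-theoretic reduction from $G(de,1,2)$ to $G(de,e,2)$; I expect this to be where the real work of the proof lies, with the exceptional groups being comparatively routine.
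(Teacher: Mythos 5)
Your overall architecture matches the paper's: a computer check for the exceptional rank-$2$ groups using exactly the algorithm of \S 5, and a separate hand argument for the infinite family $G(de,e,2)$. For the exceptionals your treatment is the same as the paper's and is fine. The problem is that for the infinite family --- which you yourself correctly identify as ``where the real work of the proof lies'' --- you do not actually do the work: you name two candidate strategies (an explicit generating-word argument, or a Clifford-theoretic reduction from $G(de,1,2)$ to $G(de,e,2)$) and execute neither. Moreover, the ``structural mechanism'' you isolate does not reduce the problem to anything: the fact that every irreducible embeds in some $V^{\otimes r}$ is automatic from Burnside--Molien and holds for any faithful $V$, whereas the proposition is about the \emph{subring} generated by $V$ and the linear characters, i.e.\ about being able to peel constituents off tensor powers one at a time using only $\Z$-linear combinations of products of generators. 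That peeling is precisely what requires explicit control of the tensor product decompositions, and it is absent from your write-up.

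For the record, the paper's resolution of the infinite family is short but concrete. First, every $2$-dimensional irreducible of $G(de,e,2)$ extends to $G(de,1,2)$, which reduces the problem to $e=1$. Then for $W = G(d,1,2)$, generated by $t = \mathrm{diag}(1,\zeta)$ and $s = (1\ 2)$, the $2$-dimensional irreducibles are labelled by pairs $(i,j)$, $V = (0,1)$, and explicit matrix models give $(0,1)\otimes(0,1) = (0,2) + (\text{linear characters})$ and $(0,1)\otimes(0,k) = (0,k+1) + (1,k) = (0,k+1) + \chi_1 \otimes (0,k-1)$, where $\chi_1 : t \mapsto \zeta,\ s \mapsto 1$ satisfies $(i,j)\otimes\chi_1 = (i+1,j+1)$. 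Induction on $k$ then puts every $(0,k)$, hence (after twisting by linear characters) every $(i,j)$, in the subring generated by $V$ and the linear characters. Your proposal would be complete if you supplied this computation (or an equivalent one); as it stands, the central case is a plan rather than a proof.
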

\begin{proof}
The case of exceptional reflection groups is checked by computer, using the
algorithm above. The non-exceptional ones are the $G(de,e,2)$, so we assume
$W = G(de,e,2)$. The irreducible representations of $W$ have dimension at most 2. The ones
of dimension 2 can be extended to $G(de,1,2)$, so we can assume
without loss of generality that $e=1$.
The group $W$ is generated by $t = \mathrm{diag}(1, \zeta)$ with $\zeta = \exp(2 \ii \pi/d)$ and $s$ the
permutation matrix $(1\ 2)$. Its two-dimensional
representations are indexed by couples $(i,j)$ with $0 \leq i < j < d$.
We extend this notation to $i,j \in \Z$ with $j \not\equiv i \mod d$ by taking representatives modulo
$d$ and letting $(i,j) = (j,i)$.
 A matrix model for the images of $t$ and $s$ in the
representation $(r,r+k)$  is
$$
t \mapsto \begin{pmatrix} \zeta^r & 0 \\ 0 & \zeta^{r+k} \end{pmatrix}\ \ 
s \mapsto \begin{pmatrix} 0 & 1 \\ 1 & 0 \end{pmatrix}
$$
In particular, $V = (0,1)$. From these explicit models it is straightforward to check that
$(0,1) \otimes (0,1)$ is the sum of $(0,2)$ and 1-dimensional representations,
and that $(0,1) \otimes (0,k) = (0,k+1) + (1,k)$.
Then we consider the 1-dimensional representation $\chi_1 : t \mapsto \zeta, s \mapsto 1$.
It is clear that $(i,j) \otimes \chi_1 = (i+1,j+1)$. Letting $Q$ denote
the subring of $R(W)$ generated by $V$ and the 1-dimensional representations,
through tensoring by $\chi_1$ is it enough to show that $(0,k) \in Q$ for all
$1 \leq k \leq d$. By definition $(0,1) \in Q$, tensoring by $(0,1)$
yields $(0,2) \in Q$, and finally $(0,1) \otimes (0,k) = (0,k+1) + \chi_1 \otimes (0,k-1)$
proves the result by induction on $k$.
\end{proof}

Among the higher rank exceptional groups, we check by computer
that the union of the $\Lambda^k V$ and the one-dimensional representations generates $R(W)$
exactly for the groups $G_{23} = H_3, G_{24},G_{25},G_{26},G_{30} = H_4, G_{33}, G_{35} = E_6$
(but not $E_7$ nor $E_8$ !).

In the more classical case of the Coxeter groups $W$ of type $B_n$ and $D_n$,
it is easily checked that the subring generated by the $\Lambda^k V$
has not full rank in $R(W)$ (for $n \geq 4$).
It is thus natural to consider the non-faithful reflection representations
$U$ of dimension $n-1$ of these groups,
which correspond to $([n-1,1],\emptyset)$ and $\{ [n-1,1],\emptyset \}$
in the usual parametrizations of their irreducible representations
(see \cite{GP}). These are deduced from $V_{[n-1,1]} \in \Irr(\SN)$
through a natural morphism $W \onto \SN$. A computer
check for small values of $n$ motivates the following conjecture.

\begin{conj} For $W$ a Coxeter group of type $B_n$ or $D_{2n+1}$, $R(W)$ is generated
by the $\Lambda^k V, \Lambda^k U, k \geq 0$.
\end{conj}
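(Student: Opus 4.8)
The plan is to reduce the statement to type $B_n$ and then transport the argument of the previous sections to the hyperoctahedral group. First I would dispose of type $D_{2n+1}$: writing $m=2n+1$, the restriction map $\Res^{B_m}_{D_m}:R(B_m)\to R(D_m)$ is a surjective ring homomorphism, because for odd $m$ no pair of partitions satisfies $\alpha=\beta$, so every irreducible of $D_m$ is the restriction of an irreducible $(\alpha,\beta)$ of $B_m$ (with $(\alpha,\beta)$ and $(\beta,\alpha)$ restricting to the same one). Since $\Res$ sends the reflection representation of $B_m$ to that of $D_m$ and the pulled-back representation $U$ of $B_m$ to that of $D_m$, and commutes with exterior powers, generation of $R(B_m)$ by the $\Lambda^kV,\Lambda^kU$ at once gives generation of $R(D_m)$. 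This is exactly where the parity hypothesis enters: for even $m$ the split constituents $(\alpha,\alpha)^{\pm}$ are not separated by restrictions of these generators, which is why type $D_{2n}$ is excluded. It thus suffices to treat $W=B_n$.

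For $B_n=(\Z/2)^n\rtimes\mathfrak{S}_n$ I would first record the combinatorics: irreducibles are the $(\alpha,\beta)$ with $|\alpha|+|\beta|=n$, and a dimension count (together with Steinberg's irreducibility of the $\Lambda^kV$) identifies $\Lambda^kV=([n-k],[1^k])$, while, if $\pi:B_n\onto\mathfrak{S}_n$ is the quotient killing the signs, $U=\pi^*V_{[n-1,1]}$ and $\Lambda^kU=([n-k,1^k],\emptyset)$. Let $B_{n-1}<B_n$ be the stabilizer of the last basis vector and $H=\pi^{-1}(\mathfrak{S}_{n-1})$. A short computation with the branching rule on pairs (remove, resp. add, one box from $\alpha$ or from $\beta$) gives $\Ind^{B_n}_{B_{n-1}}\un=\un+U+V$ and $\Ind^{B_n}_H\un=\un+U$, whence, by the projection formula,
$$W\otimes U=\Ind^{B_n}_H\Res^{B_n}_HW-W,\qquad W\otimes V=\Ind^{B_n}_{B_{n-1}}\Res^{B_n}_{B_{n-1}}W-\Ind^{B_n}_H\Res^{B_n}_HW.$$
The virtue of the second identity is that the central element $-1$ acts on $(\alpha,\beta)$ by $(-1)^{|\beta|}$, forcing every constituent of $W\otimes V$ to have $|\beta'|\equiv|\beta|+1$; the box-preserving (within-$\alpha$, within-$\beta$, diagonal) parts therefore cancel and one is left with a strikingly simple Pieri rule: $(\alpha,\beta)\otimes V$ is the multiplicity-free sum over all transfers of a single box from $\alpha$ to $\beta$ or from $\beta$ to $\alpha$. (On $B_2$ this reproduces $V\otimes V=\un+U+(\emptyset,[2])+(\emptyset,[1,1])$.) Dually, since $-1$ acts trivially on the pullback $U$, the operator $\otimes\Lambda^kU$ preserves $|\beta|$ exactly and moves boxes through the $\mathfrak{S}$-combinatorics alone.

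With these rules I would run the same double induction as in Section 4, now with primary parameter the spin level $b=|\beta|$. The base case $b=0$ consists of the pullbacks $(\alpha,\emptyset)=\pi^*V_\alpha$; as products of pullbacks are pullbacks and the $\Lambda^kU$ are the pullbacks of the hook generators of $R(\mathfrak{S}_n)$, Theorem \ref{maintheo} places every $(\alpha,\emptyset)$ in the subring $Q$ generated by the $\Lambda^kV,\Lambda^kU$. For the inductive step one raises the spin one unit at a time using $V=\Lambda^1V$: given a target $(\alpha,\beta)$ with $|\beta|=b$, choose $(\alpha',\beta')$ with $|\beta'|=b-1$ by transferring one box of $\beta$ back into $\alpha$; this lies in $Q$ by the primary hypothesis, and $(\alpha,\beta)$ appears among the $\alpha'\!\to\!\beta'$ transfer terms of $(\alpha',\beta')\otimes V$, while the reverse-transfer terms drop the spin and are already in $Q$. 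It then remains, within a fixed level $b$, to separate the several same-level constituents so produced and to sweep out all shapes; for this I would use the $\Lambda^kU$, which keep us in level $b$ and act on the remaining data through the $\mathfrak{S}_{n-b}\times\mathfrak{S}_b$-combinatorics, invoking a $B_n$-analogue of Dvir's formula in which the top tensor product is governed by Littlewood--Richardson coefficients of the deleted diagrams.

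The hard part will be exactly this last point. In contrast to the transparent transfer rule for $\otimes V$, controlling $\otimes\Lambda^kU$ on representations with $\beta\neq\emptyset$ requires a stable, leading-term tensor theorem for the hyperoctahedral group refining Dvir's formula, together with a well-founded order (analogous to $\prec$) in which a single target occurs with multiplicity one while every other top constituent is already known to lie in $Q$. Establishing this triangularity --- equivalently, building an honest $B_n$-version of the graded ring $(\gr R,\odot)$ in which $\otimes U$ and $\otimes V$ are precisely the two explicit box operators above --- is the crux. Once it is in place the induction closes as in the proof of the proposition of Section 4, and a corresponding precise statement (a relation $\hat{\la}\otimes\Lambda^kV\in\la+R_\la$) follows by the same bookkeeping.
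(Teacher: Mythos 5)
The statement you are trying to prove is stated in the paper as a \emph{conjecture}, not a theorem: the author offers no proof and explicitly remarks that one ``would probably involve an understanding of the structure constants in $R(W)$ comparable to Dvir's formula for $\mathfrak{S}_n$,'' which is not currently available. Your proposal does not close that gap. The preliminary reductions are sound and worth recording: the surjectivity of $\Res^{B_m}_{D_m}$ for odd $m$ (no split pairs $(\alpha,\alpha)$ can occur), its compatibility with $V$, $U$ and exterior powers, the identifications $\Lambda^k V=([n-k],[1^k])$ and $\Lambda^k U=([n-k,1^k],\emptyset)$, the two projection-formula identities, the resulting box-transfer Pieri rule for $\otimes V$, and the base case $b=0$ via Theorem \ref{maintheo} applied to pullbacks along $B_n\onto\mathfrak{S}_n$. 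All of this is correct and consistent with the paper's framework (in particular with its discussion of why type $D_{2n}$ must be excluded).

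The genuine gap is exactly where you place it, but it is not a technical loose end --- it is the entire content of the conjecture. Raising the spin level by tensoring with $V$ produces, at level $b$, \emph{all} single-box transfers $\alpha'\to\beta'$ simultaneously, and you have no mechanism to isolate the one target $(\alpha,\beta)$ from its same-level companions. The proposed remedy, a ``stable leading-term tensor theorem'' for $B_n$ in which $\otimes\Lambda^k U$ is triangular with multiplicity-one leading coefficient relative to a well-founded order, is precisely the $B_n$-analogue of Dvir's formula whose absence is the reason the paper leaves this as a conjecture (supported only by computer checks for small $n$). Asserting its existence is assuming the conclusion in a different guise; note also that any such statement must be delicate enough to distinguish $B_n$ and $D_{2n+1}$ from $D_{2n}$, where the analogous generation fails. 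Your write-up is a reasonable plan of attack, but as it stands it reduces an open problem to another open problem rather than proving anything.
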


The proof of such a conjecture would probably involve an understanding
of the structure constants in $R(W)$ comparable to
Dvir's formula for $\SN$. Unfortunately, the combinatorial
study of the representation ring of these more general Coxeter
groups seems to be only at the beginning.

For a group of type $D_{2n}$, it can be checked that the subring generated by such elements has smaller
rank already for $D_4$. This is a general phenomenon, as can be seen in the
following way. Recall that a group $W$ of type $D_n$ is an index
2 subgroup of a Coxeter group $\widetilde{W}$ of type $B_n$.
By Clifford theory, an irreducible representations of $\widetilde{W}$
parametrized by $(\la,\mu)$ with $|\la|+|\mu| = n$
restricts either to an irreducible representation $\{ \la, \mu \}$
of $W$, precisely in the case $\la \neq \mu$, or,
in the case $\la = \mu$, to a direct sum of two irreducibles
usually denoted $\la^+$ and $\la^-$. Note that such $\la^{\pm}$
exist if and only if $n$ is even.

Choosing some $s \in \widetilde{W} \setminus W$ and
letting $\Ad s : x \mapsto sxs^{-1}$ be the automorphism
of $W$ induced by $s$, the map $\rho \mapsto \rho \circ \Ad s$
induces a $\Z$-linear involution $\eta$ of $R(W)$ which fixes
the $\{ \la, \mu \}$ and maps $\la^{\pm}$ to $\la^{\mp}$. Letting
$R(W)^{\eta}$ denote the invariant subspace, we have $R(W)^{\eta} = R(W)$
if and only if $n$ is odd. Clearly the $\Lambda^k V$ and $\Lambda^k U$
are always fixed by $\eta$, and this explains why the
subring they generate cannot be $R(W)$ when $R(W)^{\eta} \neq R(W)$.
We do not have any serious guess for a
natural generating set in these cases.

\end{document}